\newtheorem{theorem}{Theorem}
\newtheorem{lemma}{Lemma}
\newtheorem{definition}{Definition}
\newtheorem{corollary}{Corollary}
\newtheorem{example}{Example}
\def\R{\mathbb{R}}
\def\sup{\mathop{\rm sup }}
\def\Z{\rm  Z }
\newcommand{\be}{\begin{equation}}
\newcommand{\ee}{\end{equation}}
\begin{document}
\bibliographystyle{plain}

\thispagestyle{empty}
\setcounter{page}{0}

\vspace {2cm}

{\Large G. Morvai and  B.  Weiss: }

\vspace {2cm}

{\Large Intermittent estimation of stationary time series.}

\vspace {2cm}

{\Large   Test  13  (2004),  no. 2, 525--542.  }

\vspace {2cm}

\begin{abstract}
Let  $\{X_n\}_{n=0}^{\infty}$ be a stationary  real-valued time series with unknown distribution. 
Our goal is to   
estimate the conditional expectation of $X_{n+1}$ based on the observations 
$X_i$, $0\le i\le n$ in a strongly consistent way.   
Bailey and Ryabko proved that this is not possible even for ergodic binary time 
series if one estimates at all values of $n$. 
 We propose a very simple algorithm  which will  make  prediction 
 infinitely often at carefully selected stopping times chosen 
by our rule. We show that under certain conditions 
our procedure is strongly (pointwise) consistent, and $L_2$ consistent without any condition. 
 An upper bound on the growth  
of the stopping times is also presented in this paper.  
\end{abstract}

\pagebreak

\section{Introduction}

Let  $\{X_n\}_{n=0}^{\infty}$ be a real-valued time series. 
We are interested in estimating the random variable $X_{n+1}$ 
given the past observations $X_0,\dots, X_n$. 
If the random variable $X_{n+1}$ has finite expectation 
and  we are to mimimize the conditional mean squared error
then the solution 
is to choose the conditional expectation 
$E(X_{n+1}|X_0,\dots,X_n)$ . 
Usually, the distribution is not known a priori. In this case we may try to 
estimate the above quantity from observations.

Assume the distribution of the real-valued time series $\{X_n\}_{n=0}^{\infty}$ 
is stationary. 
Now the goal is to estimate 
the conditional expectation $E(X_{n+1}|X_0,\dots,X_n)$ from the data segment 
$X_0,\dots, X_n$ such that 
the difference between the estimate and 
the conditional expectation should tend to zero almost surely as 
the number   of observations  $n$  
tends to infinity. However
   \cite{GYMY98} proved that there is no such 
estimator if one estimates for all values of $n$,  even for   
all stationary and ergodic first order Markov chains  
taking values from the unit interval 
$[0,1]$. This problem was posed originally 
in  \cite{Cover75}.    \cite{Bailey76}  
(applying the method of cutting and stacking developped in  \cite{Ornstein74} 
and  \cite{Shields91}) 
constructed a family of sationary and ergodic binary processes 
such that for any  estimation scheme there was  a process 
in his family for which the difference between 
the estimate and the true conditional expectation did not  tend to zero. 
(Cf.  \cite{Ryabko88} also.)

However, for the class of all stationary and ergodic binary Markov chains of some finite 
 order one can solve this problem.
 Indeed,  
if the time series is a Markov chain of some finite (but unknown) order, we can estimate the
 order 
(cf.
\cite{CsSh00}, and  \cite{Csiszar02}) and  count frequencies of  blocks with length equal to the order.

In another  special case, for certain Gaussian processes,    \cite{Schafer} 
constructed an estimator such that 
for that family of processes the error between his estimator and the true 
conditional expectation tends to zero almost surely 
as the number of observations increases. 
 
Here we note that a totally different problem is when the goal is 
to estimate the conditional expectation 
in such a way that the {\it time average} of the 
squared error is required to vanish as the number of observations tends to infinity. 
This problem can be easily solved, cf. \cite{Bailey76},  \cite{Ornstein78}, 
 \cite{Algoet92, Algoet94, Algoet99}, 
 \cite{MoYaAl97},   
\cite{MoYaGy96},  \cite{GYLM99} and 
\cite{GYL02}.  
(See also  \cite{Weiss00} and  \cite{GYKKW02}.)

In this paper the setting is  different. We do not weaken the error criterion, 
that is, 
we will further consider the difference between our estimate and the 
true conditional expectation 
(rather than time averages) but we do not require to estimate for every time instance $n$, 
but rather, 
merely along  a stopping time sequence. That is, looking at the data segment 
$X_0,\dots,X_n$ our rule 
will decide if we dare to estimate for this $n$ or not, 
but anyhow we will definitely estimate 
for infinitely many $n$. 

Such algorithm was proposed for binary time series in \cite{Morvai00} but 
there the growth of the stopping times is like an exponential tower, 
and so that scheme is not feasible at all. A more practical algorithm was proposed in 
 \cite{MW02}
 for certain binary time series. 
In this paper we provide an  algorithm for  real-valued processes.

\section{Definition of the Estimator and Main Results}

For some technical reason, we will consider 
two-sided stationary real-valued processes $\{X_n\}_{n=-\infty}^{\infty}$. 
Note that a one-sided  stationary time series $\{ X_n\}_{n=0}^{\infty}$ 
can be extended to be a two-sided stationary time series 
$\{ X_n\}_{n=-\infty}^{\infty}$. 

\bigskip
\noindent
For notational convenience, let 
$X_m^n=(X_m,\dots,X_n)$,
where $m\le n$. Let $\{ {\cal P}_k\}_{k=0}^{\infty}$ denote a nested sequence of finite or countably infinite partitions of the real line by
intervals.   
Let $x\rightarrow [x]^k$ denote a quantizer that assigns to any point $x\in \R$ the  unique interval in ${\cal P}_k$ that contains $x$. 
For a set $C\subseteq \R $ let ${\rm diam}(C)=\sup_{y,z\in C} |z-y|$. 
We assume that 
\begin{equation}
\label{partitionassumption}
\lim_{k\to \infty} {\rm diam}([x]^k)=0 \ \ \mbox{for all $x\in \R$.}
\end{equation} 
Let $[X_m^n]^k=([X_m]^k,\dots,[X_n]^k)$. Let $1\le l_k\le k$ be a nondecreasing sequence of 
positive  integers such that $\lim_{k\to\infty} l_k=\infty$. Put  
$$
J(n)=\min\{ j\ge 1: \ \ l_{j+1}>n\}.
$$

\bigskip
\noindent
Define the stopping times  as follows.  Set $\zeta_0=0$. 
For $k=1,2,\ldots$,  define the sequences $\eta _k$ and $\zeta_k$
recursively. 
Define 
$$
{\eta}_1=
\min\{t>0 : [X_{\zeta_{0}-(l_1-1)+t}^{\zeta_{0}+t}]^1=[X_{\zeta_{0}-(l_1-1)}^{\zeta_{0}}]^1\}
\ \ \mbox{and} \ \ 
\zeta_1=\eta_1.
$$
Next we refine the quantization and look for the next occurrence of the block of length $l_2$, namely 
$$
{\eta}_2=
\min\{t>0 : [X_{\zeta_{1}-(l_2-1)+t}^{\zeta_{1}+t}]^2=[X_{\zeta_{1}-(l_2-1)}^{\zeta_{1}}]^2\}
\ \ \mbox{and} \ \ 
\zeta_2=\zeta_{1}+\eta_2.
$$
In general, we refine the quantization, and slowly increase the block length of the next repetition, as follows: 
\begin{equation}
\label{defstoppingtime}
{\eta}_k=
\min\{t>0 : [X_{\zeta_{k-1}-(l_k-1)+t}^{\zeta_{k-1}+t}]^k=
[X_{\zeta_{k-1}-(l_k-1)}^{\zeta_{k-1}}]^k\}
\ \ \mbox{and} \ \ 
\zeta_k=\zeta_{k-1}+\eta_k.
\end{equation}
One denotes the $k$th estimate of $E(X_{\zeta_k+1}|X_0^{\zeta_k})$ by $g_k$, 
and defines it to be
\begin{equation}
\label{fkdistrestimate2}
g_k=
{1\over k}\sum_{j=0}^{k-1} X_{\zeta_j+1}. 
\end{equation}

\bigskip
\noindent Let $\R$ be the set of all real numbers and put ${\R}^{*-}$  the set of all one-sided 
  sequences of real numbers, 
that is, 
$${\R}^{*-} =\{ (\dots,x_{-1},x_0): x_i\in \R \ \ 
\mbox{for all $-\infty<i\le 0$}\}.$$
Define a metric on sequences
 $(\dots,x_{-1},x_{0},)$ and $(\dots,y_{-1},y_{0})$  as follows. Let 
\begin{equation}
\label{defdistance}
d^*((\dots,x_{-1},x_{0}),(\dots,y_{-1},y_{0}))=
\sum_{i=0}^{\infty} 2^{-i-1} {
|x_{-i}-y_{-i}|\over 1+|x_{-i}-y_{-i}| }.
\end{equation} 
(For details see \cite{Gr88} p. 51. )

\bigskip
\noindent
\begin{definition} [Almost surely continuous conditional expectation.]
The conditional expectation  $E(X_1|\dots,X_{-1},X_{0})$
is almost surely continuous if for some  set $C\subseteq {\R}^{*-}$  
which has probability one   
the conditional expectation $E(X_1|\dots,X_{-1},X_{0})$ restricted to this  set $C$ 
is continuous with respect to metric $ d^*(\cdot,\cdot)$ in (\ref{defdistance}). 
\end{definition}

\begin{example} A stationary and ergodic time series with  almost  surely continuous 
conditional expectation 
which is not continuous on the whole space.

\bigskip
We will define a transformation $S$ on the unit interval.
 Consider the binary expansion 
$r_1^{\infty}$  of each real-number $r\in [0,1)$, that is, 
$r=\sum_{i=1}^{\infty} r_i 2^{-i}$.  When there are two expansions,
use the representation which contains finitely many  $1's$.
Now let 
$$
\tau(r)= \min\{i>0: r_i=1\}.
$$
Notice that, aside from the exceptional set  $\{0\}$,  which has Lebesgue measure zero $\tau$  is finite and 
well-defined on the closed unit interval.   
The transformation is defined by 
\begin{equation}
(Sr)_i=\left\{
\begin{array}{ll}
1 & \mbox{if $0<i<\tau(r)$} \\
0 & \mbox{if $i=\tau(r)$} \\
r_i & \mbox{if $i>\tau(r)$}.
\end{array}
\right. 
\end{equation}
Notice that in fact, $Sr=r-2^{-\tau(r)}+\sum_{l=1}^{\tau(r)-1} 2^{-l}$.
  All iterations $S^k$ of $S$ for $-\infty<k<\infty$ are well defined and invertible with 
  the exception of the set of dyadic rationals which has Lebesgue measure zero. 
 This transformation $S$ could be  defined recursively as 
\begin{equation}
Sr=\left\{
\begin{array}{ll}
r-0.5 & \mbox{if $0.5\le r<1$} \\
{1+S(2r) \over 2} & \mbox{if $0\le r< 0.5$.} 
\end{array}
\right.
\end{equation}
Now choose $r$ uniformly on the unit interval. Set $X_0(r)=r$  and put $X_n(r)=S^n r$. 
Notice that the resulting 
time series 
$\{X_n\}$  is a stationary and ergodic Markov chain with order one, 
cf.  \cite{GYMY98}.
What more, one observation
 determines the whole orbit of the process. 
Observe that $E(X_{n+1}|X_0^n)=E(X_{n+1}|X_n)$ and $E(X_{n+1}|X_n=x)=S x$. Since $S$ is a continuous mapping 
disregarding the set of dyadic rationals, 
the resulting conditional expectation is almost surely continuous. However, the conditional 
expectation 
is not continuous on the whole unit interval, since it can not be made continuous, 
for example, at $0.5$. 
\end{example}

\bigskip
\noindent
The next theorem establishes the strong (pointwise) consistency of the  proposed estimator.

\begin{theorem} \label{Theorem3} Let  $\{X_n\}$ be a real-valued stationary time series 
with $E(|X_0|^2)<\infty$. 
For the estimator $g_k$ defined in~(\ref{fkdistrestimate2}) 
and for the stopping time $\zeta_k$ defined in~(\ref{defstoppingtime}),
$$
\lim_{k\to\infty} \left| g_k- E(X_{\zeta_k+1}|X_0^{\zeta_k})\right| =0\ \ 
\mbox{almost surely}
$$
provided that the conditional expectation  $E(X_1|X_{-\infty}^{0})$ is almost surely 
continuous. 
\end{theorem}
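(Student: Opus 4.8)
The plan is to pass to the two-sided stationary extension and to reduce the theorem to two essentially independent facts: that the \emph{observable} predictors $\hat m_j := E(X_{\zeta_j+1}\mid X_0^{\zeta_j})$ converge almost surely, and that the centred increments $\hat D_j := X_{\zeta_j+1}-\hat m_j$ obey a strong law. Writing $g_k = \frac1k\sum_{j=0}^{k-1}(\hat D_j + \hat m_j)$ gives the decomposition
\[
g_k - \hat m_k = \frac1k\sum_{j=0}^{k-1}\hat D_j \;+\;\Big(\frac1k\sum_{j=0}^{k-1}\hat m_j - \hat m_k\Big).
\]
If $\hat m_j\to L$ almost surely, then the Ces\`aro average $\frac1k\sum_{j<k}\hat m_j$ and $\hat m_k$ both tend to $L$, so the second bracket vanishes; and if the first term vanishes we are done. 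Note the actual value of $L$ never enters, which is convenient: I only need $\hat m_j$ to converge, not to identify the limit.

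First I would show that the \emph{full-past} predictors $m_j := E(X_{\zeta_j+1}\mid X_{-\infty}^{\zeta_j})$ converge. The engine is the recurrence rule (\ref{defstoppingtime}) together with the nesting of the partitions. By construction the level-$k$ quantization of the length-$l_k$ block ending at $\zeta_k$ equals that of the block ending at $\zeta_{k-1}$; since $\mathcal P_{k+1}$ refines $\mathcal P_k$, a level-$(k+1)$ agreement forces a level-$k$ agreement, so by induction all reversed pasts $\omega^{(i)} := (\ldots,X_{\zeta_i-1},X_{\zeta_i})$ with $i\ge k-1$ lie in one common level-$k$ cylinder fixing the most recent $l_k$ coordinates. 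Using $l_k\to\infty$ and the shrinking-diameter assumption (\ref{partitionassumption}), the $d^*$-diameter of this cylinder tends to $0$ (a routine dominated-convergence estimate on (\ref{defdistance})), so $\{\omega^{(i)}\}$ is $d^*$-Cauchy and converges to some $\omega^*$. Since the conditional expectation is almost surely continuous and each $\omega^{(i)}$ lies in the probability-one continuity set $C$, passing to the limit yields a finite limit for $m_j$ almost surely; the one point needing care is that the limit $\omega^*$ again falls in $C$ (equivalently, that the conditional-expectation map may be taken uniformly continuous along the Cauchy sequence), which I would secure by restricting to a large-probability set on which it is uniformly continuous.

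The main obstacle is to pass from $m_j$ to the observable $\hat m_j$, i.e. to prove $\hat m_j - m_j\to 0$ almost surely. Since $\mathcal H_j := \sigma(X_0^{\zeta_j})\subseteq \sigma(X_{-\infty}^{\zeta_j})$ we have $\hat m_j = E(m_j\mid \mathcal H_j)$, so this says that conditioning on the \emph{observed} finite past $X_0^{\zeta_j}$ rather than the whole past becomes negligible as $\zeta_j\to\infty$. On the canonical space the stationary form of the error is $e_n := E(X_1\mid X_{-n}^0)-E(X_1\mid X_{-\infty}^0)$, which tends to $0$ almost surely and in $L_2$ by L\'evy's martingale convergence theorem, and whose maximal function is in $L_2$ by Doob's inequality (here $E(X_0^2)<\infty$ is used). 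The difficulty is that $\hat m_j - m_j = e_{\zeta_j}\circ T^{\zeta_j}$ is this error read at the \emph{random} index $\zeta_j$ along the \emph{random} shift by $\zeta_j$; once $\zeta_j\ge N$ one has the pointwise bound $|\hat m_j-m_j|\le \big(\sup_{n\ge N}|e_n|\big)\circ T^{\zeta_j}$, and I expect the proof to combine this with integrability of the maximal function and a Borel--Cantelli / maximal-inequality argument to decouple the random shift from the random index. This is the step I expect to be genuinely delicate.

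Finally I would establish the strong law $\frac1k\sum_{j<k}\hat D_j\to 0$. Because $\eta_{j+1}\ge 1$ forces $\zeta_{j+1}\ge\zeta_j+1$, the variable $X_{\zeta_j+1}$ is $\mathcal H_{j+1}$-measurable while $\hat m_j$ is $\mathcal H_j$-measurable with $E(\hat D_j\mid\mathcal H_j)=0$; thus $\{\hat D_j\}$ is a martingale difference sequence for $\{\mathcal H_j\}$. A Chow-type strong law (equivalently, convergence of $\sum_j \hat D_j/j$ followed by Kronecker's lemma) then gives the claim once $\sum_j E(\hat D_j^2)/j^2<\infty$. Since $E(\hat D_j^2)\le E(X_{\zeta_j+1}^2)$, the remaining point is a second-moment control of $X_{\zeta_j+1}$, which I would handle by truncation at a level $M$: apply the bounded-difference martingale strong law to $X_{\zeta_j+1}\mathbf{1}[|X_{\zeta_j+1}|\le M]$ and control the tail through $E(X_0^2\mathbf{1}[|X_0|>M])$, which is small by $E(X_0^2)<\infty$. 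Combining this strong law with the convergence of $\hat m_j$ from the previous steps closes the decomposition and yields $g_k-E(X_{\zeta_k+1}\mid X_0^{\zeta_k})\to 0$ almost surely.
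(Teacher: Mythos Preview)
Your overall decomposition is sound, and Steps~2 and~4 are essentially right (for Step~4 you are working harder than necessary: the paper proves a distribution-preservation lemma showing that the shifted process $(\hat X^{(j)}_{-n})_{n}:=(X_{\zeta_j-n})_{n}$ has the \emph{same law} as $(X_{-n})_n$, whence $E(X_{\zeta_j+1}^2)=E(X_1^2)$ directly and no truncation is needed; the centred terms are then an orthogonal sequence with uniformly bounded second moments). The genuine gap is Step~3, where you want $\hat m_j-m_j\to 0$ almost surely. Your identity $\hat m_j-m_j=e_{\zeta_j}\circ T^{\zeta_j}$ is correct, and by the distribution-preservation lemma the law of $E_N\circ T^{\zeta_j}$ equals that of $E_N:=\sup_{n\ge N}|e_n|$. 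But this only gives $\hat m_j-m_j\to 0$ \emph{in probability}; a Borel--Cantelli argument would require summability of $P(E_j>\epsilon)$, and Doob's maximal inequality gives no rate whatsoever for $\|E_j\|_2\to 0$, so the sum can diverge. There is no monotonicity in $j$ to upgrade subsequential a.s.\ convergence either, because each term is evaluated at a different random shift $T^{\zeta_j}$. Your fallback of ``restrict to a large-probability set on which $e$ is uniformly continuous'' does not close the gap: it controls $m_j$, not the conditional expectation $\hat m_j=E(m_j\mid X_0^{\zeta_j})$, and in any case the limit point need not lie in the chosen compact set.

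The paper's route around this is the key idea you are missing. It constructs a \emph{fixed} one-sided sequence $\tilde X^0_{-\infty}$ as the coordinatewise limit $\tilde X_{-n}:=\lim_j X_{\zeta_j-n}$ (the nested cells shrink a.s.), proves that $\tilde X^0_{-\infty}$ has the \emph{same distribution} as $X^0_{-\infty}$ (hence lies in the continuity set $C$ a.s.), and---crucially---observes that $\tilde X^0_{-\infty}$ is $\sigma(X_0^\infty)$-measurable. Writing $e(\cdot)=E(X_1\mid X^0_{-\infty}=\cdot)$, one then has: (i) $m_j=e(\hat X^{(j)})\to e(\tilde X^0_{-\infty})$ by continuity of $e$ at $\tilde X^0_{-\infty}\in C$; (ii) $E\big(e(\tilde X^0_{-\infty})\mid X_0^{\zeta_j}\big)\to e(\tilde X^0_{-\infty})$ by the \emph{forward} martingale convergence theorem along the increasing filtration $\sigma(X_0^{\zeta_j})\uparrow\sigma(X_0^\infty)$; and (iii) the gap $\big|E(e(\tilde X^0_{-\infty})\mid X_0^{\zeta_j})-\hat m_j\big|$ is dominated by an $X_0^{\zeta_j}$-measurable modulus $\Delta_j$ (the oscillation of $e$ over the current cylinder intersected with $C$), which tends to $0$ because $\tilde X^0_{-\infty}$ sits in that cylinder and $e$ is continuous there. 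Combining (ii) and (iii) gives $\hat m_j\to e(\tilde X^0_{-\infty})$ almost surely, which together with (i) yields $\hat m_j-m_j\to 0$. The whole manoeuvre replaces your random-shift problem by a genuine martingale in the \emph{observed} filtration, and this is what makes the a.s.\ convergence go through.
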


\bigskip
The proof of  Theorem~\ref{Theorem3} involves both the martingale convergence
theorem and classical convergence results for an auxilliary sequence
of orthogonal random variables. The assumption on the 
almost sure continuity of the conditional expectation
is crucial in going from the auxilliary variables to the actual 
random variables that take part in the estimator.

\bigskip
 The consistency  holds independently of how the  sequence $l_k$ and
the partitions are chosen as long as $l_k$ goes to infinity and the
partitions become finer.  However, the choice of these sequences has a
great influence on the growth of the stopping times.

\bigskip
From the proof of  \cite{Bailey76}, \cite{Ryabko88} and 
\cite{GYMY98} it is clear that even for  
the class of all stationary and ergodic  binary 
time series with almost surely continuous conditional expectation  
$E(X_1|\dots,X_{-1},X_{0})$ one can not estimate $E(X_{n+1}|X_0^n)$ for all $n$ 
strongly (pointwise) 
consistently.

Note that the processes constructed by the method of 
cutting and stacking 
(cf. \cite{Ornstein74} and \cite{Shields91})
are stationary processes with 
almost surely continuous conditional expectations.

The stationary processes with almost surely continuous 
conditional expectation  generalize the 
processes for which  the  conditional expectation is actually continuous. 
(Cf.  \cite{Ka90} or   \cite{Ke72}.)

\bigskip
If one's goal is to estimate the conditional mean merely in $L_2$ 
then the problem becomes very easy and even for all  time instances  
one can estimate it, cf.  \cite{MoYaAl97}. 
We will prove that our proposed estimator $\{g_n\}$ along the stopping time sequence 
$\{\zeta_n\}$ is not just strongly consistent under the above mentioned 
continuity condition  
  but also 
consistent in $L_2$ without any continuity  condition. 
The point here is that our scheme achieves 
two goals simultanously.   
In this way, if one runs our algorithm he can be sure that  
if the above mentioned continuity condition holds then the algorithm  achieves strong consistency and 
if unfortunately that condition fails to hold then 
even in that case it achieves $L_2$ consistency. Precisely:

\begin{theorem} \label{Theorem5} Let $\{X_n\}$ be a real-valued stationary time 
series with $E(|X_0|^2)<\infty$. For the estimator
defined in~(\ref{fkdistrestimate2}) and 
for the stopping time $\zeta_k$ defined in~(\ref{defstoppingtime}),
\begin{equation}\label{fkstatement5} 
\lim_{k\to\infty} E\left(\left| g_k-
E(X_{\zeta_k+1}|X_0^{\zeta_k})\right|^2\right) =0. 
\end{equation} 
\end{theorem}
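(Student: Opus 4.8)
The plan is to pass to the two-sided model with left shift $T$, to write $Y_j=X_{\zeta_j+1}$, and to abbreviate the target as $Q_k=E(X_{\zeta_k+1}\mid X_0^{\zeta_k})$ and $m_\infty=E(X_1\mid X_{-\infty}^0)$. Since $\zeta_j+1\le\zeta_k$ for $j<k$, the estimator $g_k$ is $\sigma(X_0^{\zeta_k})$-measurable, and $Q_k$ is precisely the $L_2$ projection of $Y_k$ onto $L_2(\sigma(X_0^{\zeta_k}))$. Hence $Y_k-Q_k$ is orthogonal to $Q_k-g_k$, and Pythagoras gives
\[ E\left|g_k-Q_k\right|^2=E\left|Y_k-g_k\right|^2-E\left|Y_k-Q_k\right|^2. \]
So it suffices to show that both mean squared errors converge to the same limit, which I expect to be the intrinsic one-step error $E(X_0^2)-E(m_\infty^2)$; the whole problem then becomes a computation of second moments.

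The enabling device is a measure-preservation lemma for the stopping times. Fixing the level $k$, the sets $A_b=\{[X_{-(l_k-1)}^0]^k=b\}$ partition the sample space, and on each $A_b$ the map advancing to the first return of the pattern $b$ is the Kac first-return map of $A_b$, which preserves $\mu(\cdot\mid A_b)$ by Poincar\'e recurrence; consequently the single-stage map $S_k$ sending a point to the first later occurrence of its own level-$k$, length-$l_k$ pattern preserves $\mu$. Because $T^{\zeta_k}=S_k\circ\cdots\circ S_1$, each $T^{\zeta_j}$ preserves $\mu$. I will use this twice: it yields $Y_j\stackrel{d}{=}X_1$, so $E(Y_j^2)=E(X_0^2)$, and more importantly it lets me replace $E(f\circ T^{\zeta_j})$ by $E(f)$ for any integrable $f$, removing the random time shift that otherwise obstructs every estimate.

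The core is the covariance $c_{ij}=E(Y_iY_j)$. For $i<j$, conditioning on $\F_{\zeta_j}=\sigma(X_{-\infty}^{\zeta_j})$, with respect to which $Y_i$ is measurable, gives $c_{ij}=E\big(Y_i\,(m_\infty\circ T^{\zeta_j})\big)$. The structural observation that makes this tractable is a \emph{frozen block} phenomenon: since the partitions $\P_k$ are nested and $l_k$ is nondecreasing, the level-$(i+1)$, length-$l_{i+1}$ block matched when $\zeta_{i+1}$ is selected is automatically preserved at every later stopping time, so that $V_{i+1}\circ T^{\zeta_j}=V_{i+1}\circ T^{\zeta_i}=:U_{i+1}$ for all $j\ge i$, where $V_{i+1}=E(X_1\mid[X_{-(l_{i+1}-1)}^0]^{i+1})$. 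Writing $m_\infty\circ T^{\zeta_j}=U_{i+1}+(m_\infty-V_{i+1})\circ T^{\zeta_j}$, the remainder has $L_2$ norm exactly $\|m_\infty-V_{i+1}\|_2$ by measure preservation, while $U_{i+1}=E(Y_i\mid[X_{\zeta_i-(l_{i+1}-1)}^{\zeta_i}]^{i+1})$ gives $E(Y_iU_{i+1})=E(U_{i+1}^2)=E(V_{i+1}^2)$. Since $V_{i+1}\to m_\infty$ in $L_2$ by the martingale convergence theorem, $c_{ij}\to E(m_\infty^2)$ as $i\to\infty$, uniformly in $j>i$, while $c_{ii}=E(X_0^2)$.

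Feeding these covariances into $E(g_k^2)=k^{-2}\sum_{i,j<k}c_{ij}$ and $E(Y_kg_k)=k^{-1}\sum_{j<k}c_{jk}$ shows $E|Y_k-g_k|^2\to E(X_0^2)-E(m_\infty^2)$; a parallel nested-projection estimate, using that $[X_{\zeta_k-(l_k-1)}^{\zeta_k}]^k$ sits between the trivial and the full past and again invoking $V_k\to m_\infty$, shows $E(Q_k^2)\to E(m_\infty^2)$ and hence $E|Y_k-Q_k|^2\to E(X_0^2)-E(m_\infty^2)$ as well. The displayed identity then forces $E|g_k-Q_k|^2\to0$. I expect the covariance step to be the main obstacle, precisely because the signal variables $m_\infty\circ T^{\zeta_j}$ need not converge in the absence of the continuity hypothesis; this is exactly where Theorem~\ref{Theorem3} must assume almost sure continuity in order to identify a pointwise limit. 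The point of the present plan is that the frozen-block identity together with the measure-preservation lemma lets $L_2$ martingale convergence stand in for that pointwise identification, so that no continuity is needed for the $L_2$ statement.
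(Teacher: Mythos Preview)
Your argument is correct. Both proofs rest on the same three ingredients---the measure-preservation of $T^{\zeta_k}$ (the paper's Lemma~\ref{distreqlemma}, which you derive via the Kac first-return map), the nested/frozen block identity $[X_{\zeta_j-(l_{i+1}-1)}^{\zeta_j}]^{i+1}=[X_{\zeta_i-(l_{i+1}-1)}^{\zeta_i}]^{i+1}$ for $j\ge i$, and $L_2$ martingale convergence $V_j\to m_\infty$---but they are organized differently. The paper does not go through Pythagoras; it telescopes $g_k-Q_k$ directly into four pieces,
\[
\tfrac{1}{k}\sum_{j<k}\Phi_j
+\tfrac{1}{k}\sum_{j<k}\bigl(m_\infty- V_{j+1}\bigr)\!\circ T^{\zeta_j}
+\tfrac{1}{k}\sum_{j<k}\bigl(V_{j+1}-m_\infty\bigr)\!\circ T^{\zeta_k}
+\bigl(m_\infty\circ T^{\zeta_k}-Q_k\bigr),
\]
uses $(a+b+c+d)^2\le 4(a^2+b^2+c^2+d^2)$ and Jensen, and then invokes Lemma~\ref{distreqlemma} to replace each shifted expectation by its stationary counterpart; the first term vanishes by orthogonality and the remaining three by $L_2$ martingale convergence (the frozen-block identity is what makes the middle two terms line up). Your route instead computes the two mean-squared errors $E|Y_k-g_k|^2$ and $E|Y_k-Q_k|^2$ separately via the covariance analysis and the sandwich $E(V_k^2)\le E(Q_k^2)\le E(m_\infty^2)$, and subtracts. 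The payoff of your approach is that it actually identifies the common limit as the intrinsic one-step error $E(X_0^2)-E(m_\infty^2)$, which the paper's bound-and-discard argument does not display; the paper's decomposition is in return somewhat shorter and avoids the Ces\`aro bookkeeping on the covariance array.
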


\bigskip
\noindent
The next theorem gives an upper bound on the growth of the stopping times $\{\zeta_k\}$ 
in case when finite partitions are used.  

\bigskip
\noindent
\begin{theorem} \label{Theorem4}
Let  $\{X_n\}$ be a stationary  real-valued time series. 
Assume ${\cal P}_k$ is a nested sequence of finite partitions of the real line by
intervals. If for some $\epsilon>0$, 
$\sum_{k=1}^{\infty} (k+1) 2^{-l_k\epsilon}<\infty$   then  
for the stopping time $\zeta_k$ defined in~(\ref{defstoppingtime}),
$$
\zeta_k< |{\cal P}_k|^{l_k} 2^{l_k\epsilon}
$$
eventually almost surely.
\end{theorem}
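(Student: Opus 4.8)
The plan is to use the first Borel--Cantelli lemma. Write $N_k=|{\cal P}_k|^{l_k}$ for the number of distinct quantized blocks of length $l_k$ available at level $k$; since ${\cal P}_k$ is finite, $N_k<\infty$. The asserted conclusion $\zeta_k<N_k 2^{l_k\epsilon}$ eventually almost surely is exactly the statement that the event $\{\zeta_k\ge N_k 2^{l_k\epsilon}\}$ occurs for only finitely many $k$, almost surely. By Borel--Cantelli it therefore suffices to show
$$
\sum_{k=1}^\infty P\left(\zeta_k\ge N_k 2^{l_k\epsilon}\right)<\infty .
$$
I would control each summand through Markov's inequality, so the whole problem reduces to a good bound on $E(\zeta_k)$; and since $\zeta_k=\sum_{j=1}^k\eta_j$, this in turn reduces to bounding the expected waiting times $E(\eta_j)$.

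The heart of the matter is the estimate $E(\eta_k)\le N_k$. Only the quantized symbols $[X_i]^k$ enter the definition of $\eta_k$, so at each fixed level $k$ we are dealing with a stationary process on the finite alphabet ${\cal P}_k$, and $\eta_k$ is a first-return time: the first time the length-$l_k$ quantized block ending at the current position reappears. The length-$l_k$ quantized blocks partition the sequence space into at most $N_k$ cylinder cells. Kac's recurrence theorem applied to each cell $C$ gives $\int_C R_C\,d\mu\le 1$, where $R_C$ is the first-return time to $C$; summing over the (at most $N_k$) cells yields $\int R\,d\mu\le N_k$, where $R$ denotes the return time of a point to its own cell. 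For a deterministic reference instant stationarity turns this into $E(\eta_k)\le N_k$. Granting the same bound at the random reference time $\zeta_{k-1}$ (discussed below) and using that $N_j=|{\cal P}_j|^{l_j}$ is nondecreasing in $j$ --- the partitions are nested and $l_j$ is nondecreasing --- we obtain
$$
E(\zeta_k)=\sum_{j=1}^k E(\eta_j)\le\sum_{j=1}^k N_j\le k\,N_k .
$$

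Markov's inequality then gives
$$
P\left(\zeta_k\ge N_k 2^{l_k\epsilon}\right)\le\frac{E(\zeta_k)}{N_k 2^{l_k\epsilon}}\le\frac{k\,N_k}{N_k 2^{l_k\epsilon}}=k\,2^{-l_k\epsilon}\le(k+1)\,2^{-l_k\epsilon},
$$
and the hypothesis $\sum_{k}(k+1)2^{-l_k\epsilon}<\infty$ makes the series converge, so Borel--Cantelli finishes the argument. It is worth noting that the factor $(k+1)$ in the hypothesis is precisely what absorbs the factor $k$ lost in the crude bound $\sum_{j\le k}N_j\le kN_k$, while the slack $2^{l_k\epsilon}$ is what the Markov inequality spends.

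The main obstacle is the passage from the fixed-time Kac estimate to the actual $\eta_k$, which is measured not from a deterministic instant but from the random recurrence time $\zeta_{k-1}$. In general a stopping time can bias the return-time distribution upward --- by landing, on the basis of the past alone, on configurations that happen to precede long absences of the current block --- so $E(\eta_k)\le N_k$ is not automatic at a stopping time. The delicate point is to show that the Kac average nonetheless survives here; the leverage is that $\zeta_{k-1}$ is itself a recurrence time, so by construction it lands on a position whose recent past already matched an earlier block, that is, on ``typical'' rather than adversarially selected configurations. Making this rigorous --- via stationarity together with the self-similar recurrence structure built into the sequence $\{\zeta_j\}$ --- is the step I expect to require the most care.
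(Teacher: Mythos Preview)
Your route is sound and genuinely different from the paper's, but the gap you flag at the end is real and you have not closed it. The resolution is exactly Lemma~\ref{distreqlemma} of the paper: for every $j\ge 0$ the shifted process $\{X_{\zeta_{j}+n}\}_{n\in\mathbb Z}$ has the same distribution as $\{X_n\}_{n\in\mathbb Z}$. Since $\eta_k$ is the level-$k$, length-$l_k$ first-return functional applied to $\{X_{\zeta_{k-1}+n}\}_n$, Lemma~\ref{distreqlemma} gives that $\eta_k$ has \emph{exactly} the law of the level-$k$ first-return time from time $0$, and your Kac computation $E(\eta_k)\le N_k$ then goes through with no ``stopping-time bias'' to worry about. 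After that, monotonicity of $N_j$, Markov's inequality, and Borel--Cantelli finish just as you wrote. So the missing ingredient is not a new idea but the distributional invariance already proved in Lemma~\ref{distreqlemma}; once you invoke it, your proof is complete.

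For comparison, the paper never computes $E(\zeta_k)$. It translates the event $\{\zeta_k\ge L\}$ (with $L=N_k2^{l_k\epsilon}$) via Lemma~\ref{distreqlemma} to the backward event $\{-\hat\zeta^k_k\ge L\}$ and bounds the latter by a density argument along a typical orbit: for each fixed block $y\in{\cal P}_k^{l_k}$, any window of length $L$ can contain at most $k+1$ positions whose block is $y$ and whose backward stopping time $-\hat\zeta^k_k$ exceeds $L$ (this is a short combinatorial count using nestedness of the partitions and monotonicity of $l_j$). The ergodic theorem then gives $P(\text{block}=y,\ -\hat\zeta^k_k\ge L)\le (k+1)/L$ on each ergodic component, and summing over the $N_k$ blocks yields $P(\zeta_k\ge L)\le (k+1)2^{-l_k\epsilon}$, after which Borel--Cantelli ends the proof. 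Both routes land on the same probability estimate; yours is more modular (Kac $+$ Markov), the paper's is a direct orbit-counting argument that bypasses expectations. Either way, Lemma~\ref{distreqlemma} is the hinge---it is precisely the ``making this rigorous'' you anticipated.
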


\bigskip
\begin{example}
One may set $\epsilon=1$,   
$l_k=\lfloor 3  \log_2 k \rfloor$, and 
$|{\cal P}_k|=\lfloor 2^{ f_k}\rfloor$ where 
$f_k$ is an increasing sequence of positive real numbers tending to infinity arbitrary slowly. 
By Theorem~\ref{Theorem4},  
$\zeta_k< k^{3 (1+f_k)}$, which is almost a polynomial growth.  
\end{example}

In case of finite alphabet processes you can achieve a slightly better upper bound 
than in Theorem~\ref{Theorem4}. Indeed, 
let $H$ denote the entropy rate associated with the
stationary and ergodic finite alphabet time series $\{X_n\}$,  
cf.  \cite{CT91}. 
Note that in this case no quantization is needed. Then it is easy to
see, that $\zeta_k<2^{l_k (H+\epsilon)}$ eventualy almost surely 
provided that $(k+1) 2^{-l_k \epsilon}$ is summable.    
(Cf.  \cite{MW02},   \cite{OrWe93}, 
 \cite{MoYaAl97}.)

\bigskip
 If one desires to estimate $X_{\zeta_j+1}$ in $L_2$ sense based on data $X_0,\dots,X_{\zeta_j}$  then the best he can do is to choose 
the conditional expectation 
$$
g^*_j=E(X_{\zeta_{j+1}}|X^{\zeta_j}_0).
$$
Now we show that the conditional
mean squared error 
$
E((X_{\zeta_j+1}-g_j)^2|X^{\zeta_j}_0)
$
with regard to $g_j$ is close to that of the best possible 
$E((X_{\zeta_j+1}-g^*_j)^2|X^{\zeta_j}_0)$ 
for large $j$. 
Indeed, this  is an immediate consequence of 
Theorem~\ref{Theorem3}, Theorem~\ref{Theorem5}, and the fact that   
$
E((X_{\zeta_j+1}-g_j)^2|X^{\zeta_j}_0)-
E((X_{\zeta_j+1}-g^*_j)^2|X^{\zeta_j}_0)=(g_j-g^*_j)^2.
$ 

\bigskip

\begin{corollary} \label{coroolaryone}
Let $\{X_n\}$ be a stationary real-valued time series. 
Assume  $E(|X_0|^2)<\infty$. Then 
\begin{equation}\label{bayes}
\left|E\left((X_{\zeta_j+1}-g_j)^2|X^{\zeta_j}_0\right)-
E\left((X_{\zeta_j+1}-g^*_j)^2|X^{\zeta_j}_0\right)\right| \to 0
\end{equation}
in $L_1$. Moreover, if in addition,  the conditional expectation 
$E(X_{1}|X^{0}_{-\infty})$ is almost surely continuous, then  
(\ref{bayes})  holds almost surely. 
\end{corollary}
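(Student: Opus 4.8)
The plan is to reduce the Corollary entirely to the bias--variance identity quoted just before its statement, namely
$$E\left((X_{\zeta_j+1}-g_j)^2|X^{\zeta_j}_0\right)-E\left((X_{\zeta_j+1}-g^*_j)^2|X^{\zeta_j}_0\right)=(g_j-g^*_j)^2,$$
and then to invoke Theorems~\ref{Theorem5} and~\ref{Theorem3}. First I would verify this identity. The point is that both $g_j=\frac1j\sum_{i=0}^{j-1}X_{\zeta_i+1}$ and $g^*_j=E(X_{\zeta_j+1}|X^{\zeta_j}_0)$ are measurable with respect to the observed past $X^{\zeta_j}_0$ (i.e. $\mathcal{F}_{\zeta_j}$-measurable); for $g_j$ this holds because each $\eta_k\ge1$, so the stopping times are strictly increasing and $\zeta_i+1\le\zeta_{i+1}\le\zeta_j$ for every $i<j$, whence each summand $X_{\zeta_i+1}$ is one of the observed values $X_0,\dots,X_{\zeta_j}$. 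Writing $X_{\zeta_j+1}-g_j=(X_{\zeta_j+1}-g^*_j)+(g^*_j-g_j)$, squaring, and taking the conditional expectation given $X^{\zeta_j}_0$, the cross term vanishes because $g^*_j-g_j$ is measurable with respect to the conditioning field and $E(X_{\zeta_j+1}-g^*_j|X^{\zeta_j}_0)=0$ by the definition of $g^*_j$. This yields the displayed identity.

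Once the identity is in hand, both assertions are immediate. Since the right-hand side $(g_j-g^*_j)^2$ is nonnegative, the absolute value on the left of~(\ref{bayes}) equals $(g_j-g^*_j)^2$ itself, so
$$E\left|E\left((X_{\zeta_j+1}-g_j)^2|X^{\zeta_j}_0\right)-E\left((X_{\zeta_j+1}-g^*_j)^2|X^{\zeta_j}_0\right)\right|=E\left((g_j-g^*_j)^2\right).$$
For the $L_1$ statement I would note that $g^*_j=E(X_{\zeta_j+1}|X^{\zeta_j}_0)$ is exactly the conditional expectation appearing in Theorem~\ref{Theorem5}, so the right-hand side above is precisely the quantity that Theorem~\ref{Theorem5} forces to $0$; this gives $L_1$ convergence with no extra hypothesis. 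For the almost sure statement, under the added continuity assumption Theorem~\ref{Theorem3} gives $|g_j-g^*_j|\to0$ almost surely, whence $(g_j-g^*_j)^2\to0$ almost surely, which is again the absolute difference in~(\ref{bayes}).

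There is essentially no analytic obstacle here; the corollary is a repackaging of the two main theorems through the bias--variance identity. The only step deserving genuine care is the measurability claim underpinning that identity --- in particular checking that $g_j$ is adapted to $X^{\zeta_j}_0$ at the random horizon $\zeta_j$, and that the orthogonality $E(X_{\zeta_j+1}-g^*_j|X^{\zeta_j}_0)=0$ is legitimate when the conditioning time $\zeta_j$ is itself a stopping time. Both are handled by the strict monotonicity of $\{\zeta_k\}$ and the stopping-time structure already fixed in~(\ref{defstoppingtime}), so no new machinery is required.
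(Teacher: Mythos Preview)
Your proposal is correct and follows exactly the route the paper takes: the corollary is deduced from the bias--variance identity $E((X_{\zeta_j+1}-g_j)^2|X^{\zeta_j}_0)-E((X_{\zeta_j+1}-g^*_j)^2|X^{\zeta_j}_0)=(g_j-g^*_j)^2$ together with Theorems~\ref{Theorem3} and~\ref{Theorem5}. You have in fact supplied more detail than the paper does, since the paper simply asserts the identity and the implication, whereas you check the measurability of $g_j$ with respect to $X_0^{\zeta_j}$ (via $\zeta_i+1\le\zeta_{i+1}$) and spell out why the cross term vanishes.
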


\noindent
Note that 
$X_n$ can not be estimated for all $n$ in such a way  that the conditional mean squared error
tend to zero in the pointwise sense  even in case of almost surely continuous conditional expectation. 
(Cf.  \cite{Bailey76}, \cite{Ryabko88}, 
\cite{GYMY98}.)
The main point here is that along a sequence of stopping times one 
can achieve that property.

\section{Auxiliary Results}

\smallskip
\noindent
It will be useful to define other processes 
for $k\ge 0$ $\{ {\hat X}^{(k)}_n\}_{n=-\infty}^{\infty}$ as follows.
Let 
\begin{equation}
\label{defprocesses1}
\hat X^{(k)}_{-n}=X_{\zeta_k-n} \ \ \mbox{for $-\infty <n<\infty$.}  
\end{equation}

\noindent
For an arbitrary real-valued stationary time series $\{Y_n\}$, for $k\ge 0$ let $\hat\zeta^k_0(Y^0_{-\infty})=0$  
and    for all $k\ge 1$ and $1\le i\le k$ define 
$$
{\hat\eta}^k_i(Y^0_{-\infty})=
\min\{t>0 : 
[Y_{\hat\zeta^k_{i-1}-(l_{k-i+1}-1)-t}^{\hat\zeta^k_{i-1}-t}]^{(k-i+1)}
=
[{Y}_{\hat\zeta^k_{i-1}-(l_{k-i+1}-1)}^{\hat\zeta^k_{i-1}}]^{(k-i+1)}\}
$$
and
$$
\hat\zeta^k_i(Y^0_{-\infty})=\hat\zeta^k_{i-1}(Y^0_{-\infty})-\hat\eta^k_i(Y^0_{-\infty}).
$$
When it is obvious on which time series  ${\hat\eta}^k_i(Y^0_{-\infty})$ and  
$\hat\zeta^k_i(Y^0_{-\infty})$
 are evaluated,  
we will use the notation  ${\hat\eta}^k_i$ and  $\hat\zeta^k_i$.
Let $T$ denote the left shift operator,  
that is, $(T x^{\infty}_{-\infty})_i=x_{i+1}$. It is easy to see that if $\zeta_k(x_{-\infty}^{\infty})=l$ then 
${\hat \zeta}_k^k(T^l x_{-\infty}^{\infty})=-l$.

\smallskip
\noindent
We will need the next lemmas for later use. 

\begin{lemma} \label{distreqlemma} 
Let $\{X_n\}_{n=-\infty}^{\infty}$ be a real-valued stationary process. Then 
the  time series $\{{\hat X}^{(k)}_n\}_{n=-\infty}^{\infty}$, 
$\{X_n\}_{n=-\infty}^\infty$ 
have 
identical  distribution, that is, 
for all $k\ge 0$, $n\ge 0$, $m\ge 0$, and Borel set $F\subseteq {\R}^{n+1}$,
$$
P(({\hat X}^{(k)}_{m-n},\dots,{\hat X}^{(k)}_{m})\in F)=P(X^m_{m-n}\in F).
$$
Thus all the time series $\{{\hat X}^{(k)}_n\}_{n=-\infty}^{\infty}$ for $k=0,1,\dots$ 
are stationary.
\end{lemma}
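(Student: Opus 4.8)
The conceptual reason the lemma holds is that passing from $X$ to $\hat X^{(k)}$ amounts to applying the first-return (induced, Kac) transformation $k$ times, which preserves the stationary law; but since the excerpt has already set up the backward construction $\hat\zeta^k_i$ and recorded the duality $\zeta_k(x)=l\iff\hat\zeta^k_k(T^lx)=-l$, the plan is to carry this out explicitly by decomposing over the value of $\zeta_k$. Fix $k\ge 0$, $n,m\ge 0$ and a Borel set $F\subseteq\R^{n+1}$. Because $\hat X^{(k)}_j=X_{\zeta_k+j}$ by (\ref{defprocesses1}), and $\zeta_k$ is finite almost surely (it is a sum of $k$ forward first-return times, each finite a.s. by Poincar\'e recurrence applied to the positive-probability cylinder carrying the relevant quantized block; for countably infinite $\P_k$ the null cells contribute a null set that may be discarded), we may write
\[
P\left((\hat X^{(k)}_{m-n},\dots,\hat X^{(k)}_m)\in F\right)
=\sum_{l} P\left(\{\zeta_k=l\}\cap\{X_{l+m-n}^{l+m}\in F\}\right),
\]
the sum being over the integer values $l$ that $\zeta_k$ can take.

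First I would rewrite each summand back at the fixed origin. Put $H_l=\{\hat\zeta^k_k=-l\}$; the duality says precisely that $\{\zeta_k=l\}=T^{-l}H_l$, while $\{X_{l+m-n}^{l+m}\in F\}=T^{-l}\{X^m_{m-n}\in F\}$ by definition of the shift. Hence the intersection equals $T^{-l}\big(H_l\cap\{X^m_{m-n}\in F\}\big)$, and shift-invariance of the stationary law yields
\[
P\left(\{\zeta_k=l\}\cap\{X_{l+m-n}^{l+m}\in F\}\right)
=P\left(\{\hat\zeta^k_k=-l\}\cap\{X^m_{m-n}\in F\}\right).
\]
The point making this an identity of events, not merely of probabilities, is that $\{\zeta_k=l\}$ is determined by the finitely many coordinates $X_{-(l_1-1)},\dots,X_l$, so after the shift by $l$ it becomes a function of coordinates of index $\le 0$ — exactly the range on which the backward construction defining $\hat\zeta^k_k$ lives.

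Summing the rewritten summands over $l$ then collapses the decomposition:
\[
\sum_l P\left(\{\hat\zeta^k_k=-l\}\cap\{X^m_{m-n}\in F\}\right)
=P\left(\{\hat\zeta^k_k\ \text{finite}\}\cap\{X^m_{m-n}\in F\}\right)
=P\left(X^m_{m-n}\in F\right),
\]
where the last step uses that $\hat\zeta^k_k$ is finite almost surely — once more a recurrence statement, as it is a sum of $k$ backward first-return times into positive-probability cylinders. This is the asserted identity, and since its right-hand side does not involve $k$ and is shift-invariant in $m$, stationarity of each $\{\hat X^{(k)}_n\}$ follows at once.

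I expect the main difficulty to be bookkeeping rather than conceptual. Two points need care: verifying that $\{\zeta_k=l\}$ depends only on a bounded window of coordinates, so the duality genuinely transfers events; and checking that every backward first-return $\hat\eta^k_i$, at its own level $k-i+1$ and block length $l_{k-i+1}$, is finite almost surely. The duality itself, though called routine, is the linchpin: I would want to confirm that reflecting time about $\zeta_k$ sends the forward matching at level $i$ and length $l_i$ (which produces $\eta_i$) to the backward matching $\hat\eta^k_{k-i+1}$ at the same level and length, so that the peeled-off backward returns recover $\eta_k,\eta_{k-1},\dots,\eta_1$ in that order and sum to $-\zeta_k=-l$.
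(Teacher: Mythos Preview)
Your proof is correct and follows essentially the same route as the paper: decompose over the value $l$ of $\zeta_k$, use the duality $T^{l}\{\zeta_k=l,\ X_{l+m-n}^{l+m}\in F\}=\{\hat\zeta^k_k=-l,\ X_{m-n}^m\in F\}$ together with stationarity, and resum. The paper's argument is terser---it simply asserts the shift identity and performs the three-line computation---while you supply the surrounding justifications (finiteness of $\zeta_k$ and $\hat\zeta^k_k$, measurability of $\{\zeta_k=l\}$ on a bounded coordinate window) that the paper leaves implicit.
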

\begin{proof}
Since the time series $\{X_n\}$ is stationary and for all $k\ge 0$, $n\ge 0$, $l\ge 0$, 
$F\subseteq {\R}^{n+1}$,
\begin{equation}\label{shiftequation} 
T^{l} \{X^{\zeta_k+m}_{\zeta_k+m-n}\in F,\zeta_k=l\} =
\{ X^{m}_{m-n}\in F,{\hat \zeta}^k_k(X^0_{-\infty})=-l\},   
\end{equation}
and by the construction in~(\ref{defprocesses1}), we have  
\begin{eqnarray*}
\lefteqn{ P(({\hat X}^{(k)}_{m-n},\dots,{\hat X}^{(k)}_{m})\in F)}\\
&=&
P(X^{\zeta_k+m}_{\zeta_k+m-n}\in F)
= \sum_{l=0}^{\infty} 
P(X^{\zeta_k+m}_{\zeta_k+m-n}\in F,\zeta_k=l)\\
&=& \sum_{l=0}^{\infty} 
P(X^{m}_{m-n}\in F,{\hat \zeta}^k_k(X^0_{-\infty})=-l)= P(X^m_{m-n}\in F).
\end{eqnarray*}
The proof of the  Lemma~\ref{distreqlemma} is complete. 
\end{proof}

\bigskip
\noindent
For a given $n$, the partition cell $[X_{\zeta_j-n}]^j$ is a random set and is varying as $j\to\infty$. However, we 
will prove that eventually it shrinks. 

\begin{lemma} \label{limitexists} 
Let $\{X_n\}_{n=-\infty}^{\infty}$ be a real-valued stationary process. 
Then for all $n\ge 0$,  
 $\lim_{j\to\infty} {\rm diam}([X_{\zeta_j-n}]^j)=0 $
almost surely. 
\end{lemma}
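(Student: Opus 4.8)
The plan is to argue pathwise on the probability-one event that all the stopping times $\zeta_j$ are finite, showing that the random cells $C_j\defeq[X_{\zeta_j-n}]^j$ form a decreasing nested sequence of intervals whose diameters are then forced to zero by the partition assumption~(\ref{partitionassumption}). Fix $n\ge 0$. Since $l_j\to\infty$, there is a $j_0$ (one may take $j_0=J(n)+1$) with $l_j>n$, i.e. $n\le l_j-1$, for all $j\ge j_0$; from here on restrict attention to $j\ge j_0$.

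The key step is to extract a monotonicity relation from the defining matching property~(\ref{defstoppingtime}). At $t=\eta_j$ the definition yields the block identity $[X_{\zeta_j-(l_j-1)}^{\zeta_j}]^j=[X_{\zeta_{j-1}-(l_j-1)}^{\zeta_{j-1}}]^j$, which read coordinatewise gives $[X_{\zeta_j-i}]^j=[X_{\zeta_{j-1}-i}]^j$ for every $0\le i\le l_j-1$; in particular, as $n\le l_j-1$, we obtain $C_j=[X_{\zeta_j-n}]^j=[X_{\zeta_{j-1}-n}]^j$. Now I invoke the nesting of the partitions: because ${\cal P}_j$ refines ${\cal P}_{j-1}$, the ${\cal P}_j$-cell through any point lies inside the ${\cal P}_{j-1}$-cell through that point, so $[X_{\zeta_{j-1}-n}]^j\subseteq[X_{\zeta_{j-1}-n}]^{j-1}=C_{j-1}$. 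Combining the two displays gives $C_j\subseteq C_{j-1}$, so the $C_j$ are nested decreasing nonempty intervals, ${\rm diam}(C_j)$ is nonincreasing, and hence ${\rm diam}(C_j)\downarrow d$ for some $d\ge 0$.

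It remains to rule out $d>0$, and this is the only genuinely substantive step, where~(\ref{partitionassumption}) does the work. Suppose $d>0$. Passing to closures, the $\overline{C_j}=[\alpha_j,\beta_j]$ are nested compact intervals with $\alpha_j\uparrow a$, $\beta_j\downarrow b$ and $\beta_j-\alpha_j\to d$, so their intersection is the compact interval $[a,b]$ of length $d>0$. Choose any $x$ in the open interval $(a,b)$. For each $j\ge j_0$ we have $(a,b)\subseteq(\alpha_j,\beta_j)$, and since $C_j$ is an interval with the same endpoints as $\overline{C_j}$, its interior is exactly $(\alpha_j,\beta_j)$; thus $x\in(\alpha_j,\beta_j)\subseteq C_j$, whence $[x]^j=C_j$. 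Consequently ${\rm diam}([x]^j)={\rm diam}(C_j)\ge d>0$ for all $j\ge j_0$, contradicting~(\ref{partitionassumption}), which requires ${\rm diam}([x]^k)\to 0$. Therefore $d=0$, i.e. $\lim_{j\to\infty}{\rm diam}([X_{\zeta_j-n}]^j)=0$ almost surely.

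I expect the main obstacle to be the correct treatment of the interval endpoints in the final paragraph: the cells of ${\cal P}_j$ are intervals that may be half-open, so one must pass to closures and select $x$ strictly interior to the limiting interval to guarantee that $[x]^j$ is exactly $C_j$ rather than a neighbouring cell. Everything else is bookkeeping from the definitions, and notably neither stationarity nor Lemma~\ref{distreqlemma} is needed here beyond ensuring, on a set of probability one, that the stopping times $\zeta_j$ are all finite.
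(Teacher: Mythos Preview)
Your nesting argument and the treatment of the case $0<d<\infty$ are correct and match the paper's approach (indeed you are more careful than the paper about the endpoint issue). However, there is a genuine gap: you have not dealt with the possibility $d=\infty$, i.e.\ that ${\rm diam}(C_j)=\infty$ for \emph{all} $j\ge j_0$. The partitions ${\cal P}_k$ are allowed to be finite, in which case some cells are unbounded intervals, so this case cannot be excluded a priori. When you write ``Passing to closures, the $\overline{C_j}=[\alpha_j,\beta_j]$ are nested compact intervals'' you are tacitly assuming each $C_j$ is bounded; if instead every $C_j$ is, say, of the form $[a_j,\infty)$ with $a_j\uparrow\infty$, the closures are not compact and $\bigcap_j C_j=\emptyset$, so no point $x$ is available to contradict~(\ref{partitionassumption}). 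Nothing in your pathwise argument rules out $X_{\zeta_j-n}\to\infty$ along the stopping-time subsequence.

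This is precisely the place where the paper invokes stationarity via Lemma~\ref{distreqlemma}: one bounds
\[
P\bigl({\rm diam}([X_{\zeta_j-n}]^j)=\infty\ \mbox{for all }j\ge J(n)\bigr)
\le \lim_{j\to\infty}P\bigl({\rm diam}([X_{\zeta_j-n}]^j)=\infty\bigr)
= \lim_{j\to\infty}P\bigl({\rm diam}([X_{1}]^j)=\infty\bigr)=0,
\]
the middle equality using that $X_{\zeta_j-n}$ has the same law as $X_1$. Thus your final remark that ``neither stationarity nor Lemma~\ref{distreqlemma} is needed here'' is not correct: once the unbounded-cell case is disposed of in this way, your argument (equivalently the paper's) finishes the proof.
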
 
\begin{proof}
Observe, that by the definition  of stopping times in (\ref{defstoppingtime}), 
for a given $n$, 
$\{[X_{\zeta_j-n}]^j\}_{j=J(n)}^{\infty}$ is a decreasing sequence of intervals.
Now, if for some $j\ge J(n)$, ${\rm diam} ([X_{\zeta_j-n}]^j)<\infty$ then 
 $\lim_{i\to\infty}{\rm diam} ([X_{\zeta_i-n}]^i)=0$. To seee this notice that  if  
  $\lim_{i\to\infty}  {\rm diam} ([X_{\zeta_i-n}]^i)>0$ then  
$\bigcap_{i=J(n)}^{\infty} [X_{\zeta_i-n}]^i
\neq \emptyset$ and let $z$ denote a real number from this set. For this $z$, 
$\lim_{i\to\infty} {\rm diam} ([z]^i)>0$ contradicting our assumption in 
(\ref{partitionassumption}). 
What remains is to prove that 
$$
P( {\rm diam}[X_{\zeta_j-n}]^j=\infty \ \ \mbox{for all $j\ge J(n)$ })=0.
$$ 
Indeed by Lemma~\ref{distreqlemma} and assumption~(\ref{partitionassumption}),  
\begin{eqnarray*}
\lefteqn { P( {\rm diam}([X_{\zeta_j-n}]^j)=\infty \ \ \mbox{for all $j\ge J(n)$}) } \\
&\le& 
\lim_{j\to\infty} 
P( {\rm diam}([X_{\zeta_j-n}]^j)=\infty)
=\lim_{j\to\infty} P( {\rm diam}([{\hat X}_{-n}^{(j)}]^j)=\infty)\\
&=&\lim_{j\to\infty} P( {\rm diam}([X_{-n}]^j)=\infty)
=\lim_{j\to\infty} P( {\rm diam}([X_{1}]^j)=\infty)=0. 
\end{eqnarray*}
The proof of Lemma~\ref{limitexists} is complete. 
\end{proof}

\bigskip
\noindent
Define the time series $\{ {\tilde X}_n\}_{n=-\infty}^{0}$
\begin{equation}
\label{defprocesses2}
{\tilde X}_{-n}=
\lim_{j\to \infty} X_{\zeta_j-n} \ \ \mbox{for $n\ge 0$,}  
\end{equation}
 where the limit exists  
 since $\{[X_{\zeta_j-n}]^j\}_{j=J(n)}^{\infty}$ is a random sequence of nested
intervals and by Lemma~\ref{limitexists} their lengths tend to zero.

\bigskip

\begin{lemma} \label{distildeeqdishat}
Let $\{X_n\}_{n=-\infty}^{\infty}$ be a real-valued stationary process. 
Then the distribution of $\{ {\tilde X}_n\}_{n=-\infty}^{0}$ equals the distribution of $\{ X_n\}_{n=-\infty}^0$.
\end{lemma}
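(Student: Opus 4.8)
The plan is to realize $\{\tilde X_n\}_{n\le 0}$ as an almost sure limit of the auxiliary processes $\{\hat X^{(j)}\}$ and then transport the distributional identity already furnished by Lemma~\ref{distreqlemma} across that limit. Since the law of a one-sided process is determined by its finite-dimensional marginals, the entire argument reduces to matching, for each fixed $n$, the distribution of $(\tilde X_{-n},\dots,\tilde X_0)$ with that of $(X_{-n},\dots,X_0)$.

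First I would fix $n\ge 0$ and record the pointwise convergence underlying everything: by the defining formula (\ref{defprocesses2}) of $\tilde X$ together with (\ref{defprocesses1}), for each $m\in\{0,1,\dots,n\}$ one has $\hat X^{(j)}_{-m}=X_{\zeta_j-m}\to\tilde X_{-m}$ almost surely as $j\to\infty$. The existence of this limit is exactly the content secured by Lemma~\ref{limitexists}: the cells $[X_{\zeta_j-m}]^j$ form a nested sequence of intervals of vanishing diameter, so the values $X_{\zeta_j-m}$ they contain are Cauchy. Intersecting the finitely many probability-one sets over $m=0,\dots,n$, the random vector $(\hat X^{(j)}_{-n},\dots,\hat X^{(j)}_0)$ converges almost surely to $(\tilde X_{-n},\dots,\tilde X_0)$.

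Next I would push this convergence through an arbitrary bounded continuous test function $f:\R^{n+1}\to\R$. By continuity and the almost sure convergence just noted, $f(\hat X^{(j)}_{-n},\dots,\hat X^{(j)}_0)\to f(\tilde X_{-n},\dots,\tilde X_0)$ almost surely, and since $f$ is bounded the bounded convergence theorem gives $E\,f(\hat X^{(j)}_{-n},\dots,\hat X^{(j)}_0)\to E\,f(\tilde X_{-n},\dots,\tilde X_0)$. On the other hand, Lemma~\ref{distreqlemma} taken with $m=0$ asserts that $(\hat X^{(j)}_{-n},\dots,\hat X^{(j)}_0)$ has the same law as $(X_{-n},\dots,X_0)$ for \emph{every} $j$, so each of these expectations already equals $E\,f(X_{-n},\dots,X_0)$. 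Equating the two limits yields $E\,f(\tilde X_{-n},\dots,\tilde X_0)=E\,f(X_{-n},\dots,X_0)$ for all bounded continuous $f$, whence $(\tilde X_{-n},\dots,\tilde X_0)$ and $(X_{-n},\dots,X_0)$ share the same distribution. As $n$ was arbitrary, all finite-dimensional marginals agree and the two one-sided processes have the same law.

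This proof is essentially an assembly of the two preceding lemmas, so I do not anticipate a deep obstacle; the one point demanding genuine care is the legitimacy of the almost sure convergence of the finite-dimensional vectors, which rests squarely on Lemma~\ref{limitexists} and on the nesting of the quantization cells forced by the matching rule in (\ref{defstoppingtime}). A secondary point worth stating cleanly is the concluding measure-theoretic step: equality of $E\,f$ over all bounded continuous $f$ pins down the finite-dimensional law, and equality of all finite-dimensional laws pins down the law of the process. Equivalently, one may phrase it as: almost sure convergence implies weak convergence, weak limits are unique, and Lemma~\ref{distreqlemma} keeps the approximating laws constantly equal to the law of $(X_{-n},\dots,X_0)$, forcing the limit law to coincide with it.
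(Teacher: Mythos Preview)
Your proof is correct and is genuinely different from the paper's argument. You exploit the soft route: almost sure convergence of the vectors $(\hat X^{(j)}_{-n},\dots,\hat X^{(j)}_0)\to(\tilde X_{-n},\dots,\tilde X_0)$ forces convergence in distribution, while Lemma~\ref{distreqlemma} pins each approximating law at the law of $(X_{-n},\dots,X_0)$; uniqueness of weak limits finishes it. This is clean and uses only standard tools (bounded convergence and the fact that bounded continuous test functions separate Borel probability measures on $\R^{n+1}$).

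The paper instead proves the sharper pointwise statement that, almost surely, $[\tilde X_{-i}]^j=[\hat X^{(j)}_{-i}]^j$ for every $j\ge J(i)$, and then reads off the distributional equality cell by cell via Lemma~\ref{distreqlemma}. Establishing this cellwise coincidence requires a careful treatment of the partition endpoints: one must rule out the event that $\tilde X_{-i}$ lands exactly on a cell boundary while the approximants $\hat X^{(j)}_{-i}$ sit strictly on one side, which the paper does by a countable union bound over the endpoint sets $R_k$, $L_k$. Your approach sidesteps this boundary analysis entirely at the cost of yielding only the distributional conclusion, not the almost sure cell-matching; since the latter is not used elsewhere in the paper (the inclusion $\tilde X^0_{-\infty}\in S_j(X_0^{\zeta_j})$ needed in Theorem~\ref{Theorem3} follows already from the convergence and the closures in the definition of $S_j$), your shorter argument loses nothing that matters.
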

\begin{proof}
By Lemma~\ref{distreqlemma} it is enough to prove that for any $i\ge 0$, for all $j\ge J(i)$, 
$[{\tilde X}_{-i}]^j=[{\hat X}^{(j)}_{-i}]^j$. 
Let $R_k$ be the set of right  end-points of the right open intervals in the k-th partition, that is,  
$$
R_k=\{ b\in \R\ :\ \exists -\infty < a < b \  [a,b)\in {\cal P}_k \ \ 
\mbox{or} \ \ \exists -\infty \le a < b  \   (a,b)\in {\cal P}_k  \}.
$$ 
Similarly, let $L_k$ be the set of left  
end-points of the left open intervals in the k-th partition, that is,  
$$
L_k=\{ b\in \R\ :\ \exists b < a< \infty \   (b,a]\in {\cal P}_k \ \ 
\mbox{or} \ \ \exists b < a\le  \infty \  (b,a)\in {\cal P}_k  \}.
$$ 
If $[{\tilde X}_{-i}]^j=[{\hat X}^{(j)}_{-i}]^j$ fails for some $j\ge J(i)$ then  
this must happen at some end point, that is,  
${\tilde X}_{-i}\in \bigcup_{k=0}^{\infty} R_k$ or 
${\tilde X}_{-i}\in \bigcup_{k=0}^{\infty} L_k$. ( Since the partition sequence is a nested sequence and 
${\tilde X}_{-i}=\lim_{j\to\infty} {\hat X}^{(j)}_{-i}$.)
Therefore we can estimate: 
By (\ref{defprocesses2}), Lemma~\ref{limitexists}, and Lemma~\ref{distreqlemma}, we have 
\begin{eqnarray*}
\lefteqn{ 1-P( {\tilde X}_{-i}\in [\hat X^{(j)}_{-i}]^j  \ \ \mbox{for all $j\ge J(i)$ } )} \\
&\le& 
\sum_{k=J(i)}^{\infty}\sum_{s\in R_k} P({\tilde X_{-i}}=s, {\hat X}^{(j)}_{-i} <{\tilde X_{-i}} \  \ \mbox{for all $j\ge k$})\\
&+&\sum_{k=J(i)}^{\infty}\sum_{s\in L_k} P({\tilde X_{-i}}=s, {\hat X}^{(j)}_{-i} >{\tilde X_{-i}} \  \ \mbox{for all $j\ge k$})\\
&\le& \sum_{k=J(i)}^{\infty}\sum_{s\in R_k} \lim_{j\to\infty} P( s-{\rm diam}([{\hat X}^{(j)}_{-i}]^j)\le {\hat X}^{(j)}_{-i} <s)\\
&+& \sum_{k=J(i)}^{\infty}\sum_{s\in L_k} \lim_{j\to\infty} P( s< {\hat X}^{(j)}_{-i} \le s+{\rm diam}([{\hat X}^{(j)}_{-i}]^j) )\\
&=& \sum_{k=J(i)}^{\infty}\sum_{s\in R_k} \lim_{j\to\infty} P( s-{\rm diam}([X_{-i}]^j)\le { X}_{-i} <s)\\
&+& \sum_{k=J(i)}^{\infty}\sum_{s\in L_k} \lim_{j\to\infty} P( s<  X_{-i} \le s+{\rm diam}([X_{-i}]^j) )\\
&=& \sum_{k=J(i)}^{\infty}\sum_{s\in R_k} \lim_{j\to\infty} P( s-{\rm diam}([ X_{1}]^j)\le  X_{1} <s)\\
&+& \sum_{k=J(i)}^{\infty}\sum_{s\in L_k} \lim_{j\to\infty} P( s< X_{1} \le s+{\rm diam}([X_{1}]^j) )= 0. 
\end{eqnarray*}
The proof of Lemma~\ref{distildeeqdishat} is complete.
\end{proof}

\bigskip
\noindent
Now it is immediate that the time series $\{ {\tilde X}_n\}_{n=-\infty}^{0}$ 
is stationary, since  $\{ X_n\}_{n=-\infty}^0$ is stationary, and it can be 
extended to be a two-sided time series
$\{ {\tilde X}_n\}_{n=-\infty}^{\infty}$. 
We will use this fact only for the purpose of defining the conditional expectation 
$E({\tilde X}_1|{\tilde X}^{0}_{-\infty})$.

\section{Proof of Theorem~\ref{Theorem3}}

\bigskip
\noindent
\begin{proof}
Define the function $e : {\R}^{*-}\rightarrow (-\infty,\infty)$ as 
$$e(x^{0}_{-\infty})=
E(X_1|X^{0}_{-\infty}=x^0_{-\infty}).
$$
Recall~(\ref{fkdistrestimate2}) and consider 
\begin{eqnarray}
\nonumber
g_k &=& {1\over k}\sum_{j=0}^{k-1} \left(  
X_{\zeta_j+1}  -E(X_{\zeta_j+1}|X_{-\infty}^{\zeta_j}) \right)\\ 
&+& \nonumber {1\over k}\sum_{j=0}^{k-1} E(X_{\zeta_j+1}|X_{-\infty}^{\zeta_j})\\
&=& \label{decomposition} {1\over k}\sum_{j=0}^{k-1} \Gamma_j+
{1\over k}\sum_{j=0}^{k-1}  E(X_{\zeta_j+1}|X_{-\infty}^{\zeta_j}).
\end{eqnarray}

\noindent
Consider the first term and  
observe that
 $\{\Gamma_j\}$ is a sequence of orthogonal random variables 
with $E \Gamma_j=0$ and  
$E\left( \Gamma_j^2\right) \le E\left( (X_1)^2\right)<\infty$ since  
$E\left( \Gamma_j^2\right) \le E\left( (X_{\zeta_j+1})^2\right) $
and, by Lemma~\ref{distreqlemma}, $X_{\zeta_j+1}$ has the same distribution as $X_1$. 
Now by Theorem~3.2.2 in  \cite{Revesz68},
$$
{1\over k}\sum_{j=0}^{k-1} \Gamma_j \to 0 \ \ \mbox{almost surely.}
$$
(Alternatively, you can apply Theorem A6 in  \cite{GYKKW02})

\noindent
Now we deal with the second term. 
For  arbitrary $j\ge 0$, by 
the constructions in (\ref{defprocesses1}),(\ref{defprocesses2})      
\begin{equation}\label{firstjbitequal} 
\lim_{j\to\infty} 
d^*({\tilde X}^0_{-\infty},(\dots,{\hat X}^{(j)}_{-1},
{\hat X}^{(j)}_{0}))=0 \ \ \mbox{almost surely.}
\end{equation}
By assumption, the function $e(\cdot)$ is continuous on a set $C\subseteq {\cal \R}^{*-}$ 
with $P(X^0_{-\infty}\in C)=1$. 
By  Lemma~\ref{distreqlemma}
  and  Lemma~\ref{distildeeqdishat},  
\begin{equation} \label{setcwithprobone}
P({\tilde X}^{0}_{-\infty}\in C, (\dots,{\hat X}^{(j)}_{-1},{\hat X}^{(j)}_0)\in C 
\ \ \mbox{for all $j\ge0$})=1.
\end{equation}  

\noindent 
Now by the continuity of $e(\cdot)$ on the set $C$, and by~(\ref{firstjbitequal}) and 
(\ref{setcwithprobone}),  

\begin{equation} \label{convofcondexp}
E(X_{\zeta_j+1}|X^{\zeta_j}_{-\infty})=e(\dots,{\hat  X}^{(j)}_{-1},{\hat  X}^{(j)}_{0})
\to e({\tilde X}^{0}_{-\infty})=
E({\tilde X}_1|{\tilde X}^{0}_{-\infty}).
\end{equation}
Thus $g_k\to E({\tilde X}_1|{\tilde X}^{0}_{-\infty})$ almost surely. 

\noindent
What remains to be proven is  that almost surely, 
$
E(X_{\zeta_j+1}|X_{0}^{\zeta_j})\to 
E({\tilde X}_1|{\tilde X}^{0}_{-\infty})
$. 

\noindent
For any set $A\subseteq \R$ let ${\rm closure}(A)$ denote the smallest closed subset of 
the real line  containing $A$.  
Put 
$$
S_j(X_0^{\zeta_j})=\{ z^0_{-\infty}\in \R^{*-} :  
z_{-l_{j+1}+1}\in {\rm closure}([X_{\zeta_j-l_{j+1}+1}]^j),\dots, 
z_{0}\in {\rm closure}([X_{\zeta_j}]^j) \}.   
$$
By~(\ref{defstoppingtime}), ~(\ref{defprocesses2}) and (\ref{setcwithprobone}), almost surely, for all $j$,  
\begin{equation}\label{alwaysinc}
X_{-\infty}^{\zeta_j}\in S_j(X_0^{\zeta_j})\bigcap C \ \ 
\mbox{and} \ \ 
{\tilde X}^{0}_{-\infty} \in S_j(X_0^{\zeta_j})\bigcap C.
\end{equation}
Put
$$
\Delta_j(X_0^{\zeta_j})=\sup_{y^0_{-\infty}, z^0_{-\infty}\in S_j(X_0^{\zeta_j})\bigcap C}
|e(y^0_{-\infty})-e(z^0_{-\infty})|.
$$
Now since $e(\cdot)$ is continuous at ${\tilde X}^{0}_{-\infty}$ on set $C$ and 
by (\ref{alwaysinc}) and Lemma~\ref{limitexists}, 
\begin{equation}
 \label{Deltatozero}
\lim_{j\to\infty} \Delta_j(X_0^{\zeta_j})=0 \ \ \mbox{almost surely.}
\end{equation}
By (\ref{Deltatozero}) almost surely, 
\begin{eqnarray}
\lefteqn{\nonumber
\limsup_{j\to\infty}\left|E\left( e({\tilde X}^{0}_{-\infty} )|X_0^{\zeta_j}\right)- 
E\left( e(X_{-\infty}^{\zeta_j})|X_0^{\zeta_j}\right)\right|}\\
&\le&\nonumber
\limsup_{j\to\infty} E\left(\left| e({\tilde X}^{0}_{-\infty} )- 
e(X_{-\infty}^{\zeta_j})\right| |X_0^{\zeta_j}\right)\\
&\le& \nonumber \limsup_{j\to\infty} E\left( \Delta_j(X_0^{\zeta_j})|X_0^{\zeta_j}\right)\\
&=&  \label{difftozero} \limsup_{j\to\infty} \Delta_j(X_0^{\zeta_j}) =0.
\end{eqnarray}
Now consider
$$
E\left(X_{\zeta_j}|X_0^{\zeta_j}\right)=
E\left( e({\tilde X}^{0}_{-\infty})|X_0^{\zeta_j} \right)-
\left\{ E\left( e({\tilde X}^{0}_{-\infty})|X_0^{\zeta_j} \right)-
E\left( e(X_{-\infty}^{\zeta_j})|X_0^{\zeta_j}\right)\right\}.
$$
The first term it is a martingale and tends to $e({\tilde X}^{0}_{-\infty})$ 
by Theorem 7.6.2 in \cite{Ash72})
since by Lemma~\ref{distildeeqdishat},  
$E \left| e({\tilde X}^{0}_{-\infty})\right| = 
E \left| e(X^{0}_{-\infty})\right|\le
 E\left| X_1\right| <\infty$, and ${\tilde X}_{-\infty}^0$ is 
measurable with respect to $\sigma(X_0^{\infty})$.  
The second term tends to zero by ~(\ref{difftozero}). 
The proof of Theorem~\ref{Theorem3} is complete.
\end{proof}

\section{Proof of Theorem~\ref{Theorem5}}

\bigskip
\noindent
\begin{proof}
By Jensen's inequality, (\ref{defprocesses1}) and Lemma~\ref{distreqlemma},   
\begin{eqnarray*} 
\lefteqn{
{1\over 4}  E \left(\left| g_k- E(X_{\zeta_k+1}|X_{0}^{\zeta_k})\right|^2\right) }\\ 
&\le&  E \left(\left| {1\over
k}
\sum_{j=0}^{k-1} \left( X_{\zeta_j+1} - E(X_{\zeta_j+1}|X_{-\infty}^{\zeta_j}) \right) 
\right|^2\right) \\ 
&+&  {1\over
k}\sum_{j=0}^{k-1}
E\left( \left| E({\hat X}^{(j)}_{1}|\dots,{\hat X}^{(j)}_{-1},{\hat X}^{(j)}_0)- 
E({\hat X}^{(j)}_{1}|[{\hat
X}^{(j)}_{-(l_{j+1}-1)},\dots,{\hat X}^{(j)}_0]^{j+1})  \right|^2\right)\\ 
&+&  {1\over k}\sum_{j=0}^{k-1} 
E\left(\left| E({\hat X}^{(k)}_{1}|[{\hat
X}^{(k)}_{-(l_{j+1}-1)},\dots,{\hat X}^{(k)}_0]^{j+1})-
E({\hat X}^{(k)}_{1}|\dots,{\hat X}^{(k)}_{-1},{\hat X}^{(k)}_0)\right|^2\right)\\ 
&+& E\left(
\left| E({\hat X}^{(k)}_{1}|\dots,{\hat X}^{(k)}_{-1},{\hat X}^{(k)}_0)-
 E({\hat X}^{(k)}_{1}|{\hat X}^{(k)}_{{\hat\zeta}^k_k},\dots,{\hat
X}^{(k)}_0)  \right|^2\right), 
\end{eqnarray*} 
where ${\hat\zeta}^k_k$ is evaluated on $\{ {\hat X}^{(k)}_n \}_{n=-\infty}^0$. 
The first
term converges to zero since 
$\Phi_j=X_{\zeta_j+1} -E(X_{\zeta_j+1}|X_{-\infty}^{\zeta_j})$ 
is a sequence of orthogonal random variables  with 
$E(|X_{\zeta_j+1}|^2)=E(|X_0|^2)<\infty$, and 
\begin{equation} \label{loneconv}
E\left(\left| {1\over k} 
\sum_{j=0}^{k-1} 
\Phi_j \right|^2\right) 
=  {1\over k^2} \sum_{j=0}^{k-1} E\left(\left| \Phi_j \right|^2\right) 
\le {1\over k^2}\sum_{j=0}^{k-1} 
 E (|X_{\zeta_j+1}|^2)={1\over k} E (|X_1|^2)\to 0.
\end{equation}
Applying~(\ref{defprocesses1}) and Lemma~\ref{distreqlemma}, one can estimate the sum of the last three terms 
by the sum 
\begin{eqnarray*} 
\lefteqn{\limsup_{k\to\infty} {1\over k}\sum_{j=0}^{k-1} 
E\left(\left|E(X_1|X_{-\infty}^0)- E(X_1|[X_{-(l_{j+1}-1)}^0]^{j+1}) 
\right|^2\right)} \\ 
&+&\limsup_{k\to\infty} {1\over k}\sum_{j=0}^{k-1} 
E\left(\left| E(X_1|[X_{-(l_{j+1}-1)}^0]^{j+1})-E(X_1|X_{-\infty}^0)\right|^2\right)\\ 
&+&\limsup_{k\to\infty}
E\left(\left|E(X_1|X_{-\infty}^0)-E(X_1|X_{{\hat\zeta}^k_k}^0)\right|^2\right), 
\end{eqnarray*} 
where ${\hat\zeta}^k_k$ is now evaluated on $\{ X_n \}_{n=-\infty}^0$. 
All of these terms converge to zero since 
$\lim_{j\to\infty}
E(X_1|X_{-j}^0)=E(X_1|X_{-\infty}^0)$ 
and 
$\lim_{j\to\infty} E(X_1|[X_{-(l_{j+1}-1)}^0]^{j+1})=E(X_1|X_{-\infty}^0)$ 
in $L_2$ by the martingale
convergence theorem, cf. Theorem 7.6.10 and Theorem 7.6.2 in \cite{Ash72},  
and thus the limit in fact exists and equals zero. The proof of Theorem~\ref{Theorem5} is
complete. 
\end{proof}

\section{ Proof of Theorem~\ref{Theorem4}}

\noindent
\begin{proof}
Let $ {\R}^{\Z}$ be the set of all two-sided  sequences of real numbers, that is, 
$${\R}^{\Z}= \{ (\dots,x_{-1},x_0,x_1,\dots):
x_i\in \R \ \ \mbox{for all $-\infty<i< \infty$}\}.
$$

\noindent
Let $y^0_{-l_k+1}\in {\cal P}_k^{l_k}.$ 
Define the set $Q_k(y^0_{-l_k+1})$ as follows: 
$$
Q_k(y^0_{-l_k+1})=\{z^{\infty}_{-\infty}\in {\R}^{\Z} :  -{\hat \zeta}^k_k(z^0_{-\infty}) 
\ge |{\cal P}_k|^{l_k} 2^{l_k\epsilon} , [z^0_{-l_k+1}]^k=y^0_{-l_k+1})\}.
$$
We will estimate the probability of $Q_k(y^0_{-l_k+1})$  by means of the ergodic theorem. To do this apply the ergodic decomposition theorem,
cf. \cite{Gr88}, and denote the distribution according to the ergodic mode $\omega$ by $P_{\omega}$.
Let $x^{\infty}_{-\infty}\in {\R}^{\Z}$ be a typical sequence according to  $P_{\omega}$.  
Define $\alpha_0(y^0_{-l_k+1})=0$ 
and for $i\ge 1$ let 
$$
\alpha_i(y^0_{-l_k+1})=\min \{l> \alpha_{i-1}(y^0_{-l_k+1}): T^{-l} x_{-\infty}^{\infty}\in Q_k(y^0_{-l_k+1})\}.
$$
Define also $\beta_0(y^0_{-l_k+1})=0$  
and for $i\ge 1$ let 
$$
\beta_i(y^0_{-l_k+1})=\min \{l> \beta_{i-1}(y^0_{-l_k+1})+|{\cal P}_k|^{l_k} 2^{l_k\epsilon}: 
T^{-l} x_{-\infty}^{\infty}\in Q_k(y^0_{-l_k+1})\}.
$$
Observe that for arbitrary $i>0$, 
$$
\sum_{j=1}^{\infty}   1_{\{\beta_{i-1}(y^0_{-l_k+1}) < 
\alpha_j(y^0_{-l_k+1}) \le \beta_i(y^0_{-l_k+1})\}}\le k+1.
$$
By  Lemma~\ref{distreqlemma} and ergodicity, 
\begin{eqnarray*}
\lefteqn{ 
P_{\omega}((\dots, {\hat X}^{(k)}_{-1}, {\hat X}^{(k)}_0, {\hat X}^{(k)}_{1},\dots )\in Q_k(y^0_{-l_k+1}))=
P_{\omega}( X_{-\infty}^{\infty} \in Q_k(y^0_{-l_k+1}) ) }\\
&=& 
\lim_{t\to\infty} {1\over \beta_t(y^0_{-l_k+1})} \sum_{j=1}^{\infty} 
1_{ \{\alpha_j(y^0_{-l_k+1})\le \beta_t(y^0_{-l_k+1})\} }  \\
&=&  \lim_{t\to\infty}{1\over \beta_t(y^0_{-l_k+1})} 
\sum_{i=1}^{t} \sum_{j=1}^{\infty}   1_{\{\beta_{i-1}(y^0_{-l_k+1}) < 
\alpha_j(y^0_{-l_k+1}) \le \beta_i(y^0_{-l_k+1})\}} \\
&\le&  \lim_{t\to\infty} 
{t (k+1)\over t |{\cal P}_k|^{l_k} 2^{l_k\epsilon} }
= 
{(k+1) \over |{\cal P}_k|^{l_k} 2^{l_k\epsilon} }. 
\end{eqnarray*}
Since the right hand side does not depend on $\omega$, 
the same upper bound applies for the original stationary time series $\{X_n\}$, that is, 
$$   
P((\dots, {\hat X}^{(k)}_{-1}, {\hat X}^{(k)}_0, {\hat X}^{(k)}_{1},\dots )\in Q_k(y^0_{-l_k+1}))\le
{(k+1) \over |{\cal P}_k|^{l_k} 2^{l_k\epsilon} }. 
$$
By the construction in (\ref{defprocesses1})  
$-{\hat \zeta}_k^k(\dots,{\hat X}^{(k)}_{-1},{\hat X}^{(k)}_{0})=\zeta_k(X_0^{\infty})$ 
   we get  
\begin{eqnarray*}
\lefteqn{
P(\zeta_k(X_0^{\infty})\ge |{\cal P}_k|^{l_k} 2^{l_k \epsilon} )}\\
&=&
P(-{\hat \zeta}_k^k(\dots,{\hat X}^{(k)}_{-1}, {\hat X}^{(k)}_{0}) 
\ge |{\cal P}_k|^{l_k} 2^{l_k\epsilon})\\
&=& \sum_{y^0_{-l_k+1}\in {\cal P}_k^{l_k}} 
P((\dots, {\hat X}^{(k)}_{-1}, {\hat X}^{(k)}_0, {\hat X}^{(k)}_{1},\dots )\in Q_k(y^0_{-l_k+1}))
\le 
(k+1) 2^{-l_k \epsilon}.
\end{eqnarray*}
By assumption, the right hand side sums, the Borel-Cantelli Lemma   yields that  
$\zeta_k< |{\cal P}_{k}|^{l_k} 2^{l_k\epsilon}$ eventually almost surely
and Theorem~\ref{Theorem4} is proved. 
\end{proof}


\begin{thebibliography}{00}

\bibitem{Algoet92}
P. Algoet,
"Universal schemes for prediction, gambling and portfolio selection,"
{\it Annals of Probability}, vol. 20, pp. 901--941, 1992. Correction: {\it ibid.} 
vol. 23, pp. 474--478, 1995.

\bibitem{Algoet94}
P. Algoet,
"The strong low of large numbers for sequential decisions under uncertainity,"
{\it IEEE Transactions on Information Theory}, vol. 40, pp. 609--634, 1994.  

\bibitem{Algoet99}
P. Algoet,
"Universal schemes for learning the best nonlinear predictor 
given the infinite past and side information,"
{\it IEEE Transactions on Information Theory}, vol. 45, pp. 1165--1185, 1999.  


\bibitem{Ash72} R.B. Ash, 
{\it Real Analysis and Propbability.} 
Academic Press, New York, 1972. 



\bibitem{Bailey76}
D. H. Bailey,
{\it Sequential Schemes for Classifying and Predicting 
Ergodic Processes.} Ph. D. thesis, Stanford University, 1976.


\bibitem{Cover75}
T. M. Cover,
"Open problems in information theory,"
in {\it 1975 IEEE Joint Workshop on Information Theory},
pp. 35--36. New York: IEEE Press, 1975.

\bibitem{CT91}
T.M. Cover and J. Thomas,
{\it Elements of Information Theory}, Wiley, 1991.  

\bibitem{Csiszar02}
I. Csisz\'ar,
"Large-scale typicality of Markov sample paths and consistency of MDL order estimators,"
{\it IEEE Transactions on Information Theory.}, vol. 48, pp. 1616-1628, 2002. 


\bibitem{CsSh00}
I. Csisz\'ar and P. Shields,
"The consistency of the BIC Markov order estimator,"
{\it Annals of Statistics.}, vol. 28, pp. 1601-1619, 2000. 


\bibitem{Gr88}
R.M. Gray,
{\it Probability, Random Processes, and Ergodic Properties.} 
Springer-Verlag, New York, 1988.

\bibitem{GYKKW02}
L. Gy\"orfi, M. Kohler, A. Krzy\. zak, and H. Walk, 
{\it A Distribution Free Theory of Nonparametric Regression.}
Springer-Verlag, New York, 2002. 


\bibitem{GYL02}
L. Gy\"orfi and G. Lugosi,
"Strategies for sequential prediction of stationary time series,"
{ in:  \it Modeling Uncertainity 
An Examination of Stochastic Theory, Methods, and Applications} 
M.Dror, P. L'Ecuyer, F. Szidarovszky (Eds.), pp. 225--248, Kluwer Academic Publishers, 2002.  

\bibitem{GYLM99}
L. Gy\"orfi, G. Lugosi and G. Morvai,
"A simple randomized algorithm for consistent sequential prediction of ergodic time series,"
{\it IEEE Transactions on Information Theory}, vol. 45, pp. 2642--2650, 1999. 

\bibitem{GYMY98}
L. Gy\"orfi, G. Morvai, and  S. Yakowitz,
"Limits to consistent on-line forecasting for ergodic time series,"
{\it IEEE Transactions on Information Theory}, vol. 44, pp. 886--892, 1998. 


\bibitem{Ka90}
S. Kalikow
"Random Markov processes and uniform martingales ,"
{\it Israel Journal of Mathematics}, vol. 71, pp. 33--54, 1990. 

\bibitem{Ke72}
M. Keane
"Strongly mixing g-measures,"
{\it Invent. Math. }, vol. 16, pp. 309--324, 1972. 


\bibitem{Morvai00}
G. Morvai "Guessing the output of a stationary binary time series"
{in: \it Foundations of Statistical Inference}, 
Y. Haitovsky, H.R. Lerche, Y. Ritov (Eds.), 205--213, Physika Verlag, 2003.  


\bibitem{MW02}
G. Morvai and B. Weiss, "Forecasting for stationary binary time series"
To appear in {\it Acta Applicandae Mathematicae}. 


\bibitem{MoYaAl97}
G. Morvai, S. Yakowitz, and P. Algoet,
"Weakly convergent nonparametric forecasting of stationary time series,"
{\it IEEE Transactions on Information Theory}, vol. 43, pp. 483-498, 1997.

\bibitem{MoYaGy96}
G. Morvai, S. Yakowitz, and L. Gy\"orfi,
"Nonparametric inferences for ergodic, stationary time series,"
{\it Annals of Statistics.}, vol. 24, pp. 370--379, 1996. 

\bibitem{Ornstein78}
D. S. Ornstein,
"Guessing the next output of a stationary process,"
{\it Israel J. Math.,} vol. 30, pp. 292--296, 1978.

\bibitem{Ornstein74}
D. S. Ornstein,
 {\it Ergodic Theory, Randomness, and Dynamical Systems.}
Yale University Press, 1974.


\bibitem{OrWe93}
D. S. Ornstein and B. Weiss,
"Entropy and data compression schemes,"
{\it IEEE Transactions on Information Theory}, vol. 39, pp. 78--83, 1993.

\bibitem{Revesz68}
P. R\'ev\'esz, {\it The Law of Large Numbers}, Academic Press, 1968. 


\bibitem{Ryabko88}
B. Ya. Ryabko,
"Prediction of random sequences and universal coding,"
{\it Problems of Inform. Trans.,} 
vol. 24, pp. 87-96, Apr.-June 1988.


\bibitem{Schafer}
D. Sch\"afer,
"Strongly consistent online forecasting of centered Gaussian processes,"
{\it IEEE Transactions on Information Theory}, vol. 48, pp. 791-799, 2002.


\bibitem{Shields91}
P.C. Shields,
"Cutting and stacking: a method for constructing stationary processes,"
{\it IEEE Transactions on Information Theory,}
vol. 37, pp. 1605--1614, 1991.


\bibitem{Weiss00}
B. Weiss, {\it Single Orbit Dynamics}, American Mathematical Society, 2000. 


\end{thebibliography}
\end{document}